\newtheorem{theorem}{Theorem}[section]
\theoremstyle{definition}
\numberwithin{equation}{section}
\begin{document}

\vspace{0.5in}

\renewcommand{\bf}{\bfseries}
\renewcommand{\sc}{\scshape}
%insert defs/styles
\vspace{0.5in}

\title[Cardinality Estimations of Finite Sets]%
{Cardinality Estimations of Sets with Interval Uncertainties in Finite Topological Spaces}

%    Information for first author:

\author{J.F. Peters}
\address{Department of Electrical and Computer Engineering, University of Manitoba and Department
of Mathematics, Faculty of Arts and Sciences, Adýyaman University, 02040
Adýyaman, Turkey}
\thanks{The research has been supported by the Natural Sciences \&
Engineering Research Council of Canada (NSERC) discovery grant 185986, Instituto Nazionale di Alta Matematica (INdAM) Francesco Severi, Gruppo Nazionale  per le Strutture Algebriche, Geometriche e Loro Applicazioni grant 9 920160 000362, n.prot U 2016/000036 and Scientific and Technological Research Council of Turkey (T\"{U}B\.{I}TAK) Scientific Human
Resources Development (BIDEB) under grant no: 2221-1059B211301223.}
%    Current address (if needed):
%\curraddr{}
\email{james.peters3@ad.umanitoba.ca}

\author{I.J. Dochviri}
\address{Department of Mathematics; Caucasus International University;
Tbilisi, Georgia}
%    Current address (if needed):
%\curraddr{}
\email{iraklidoch@yahoo.com}

%\thanks{The first author was supported in part by NSF Grant \#000000.}

%    Information for second author (if needed):
%\author{Author Two}
%\address{}
%\email{}
%\thanks{Support information for the second author.}

%    General info
%%%%%%%%%%%%%%%%%%%%%%%%%%%%%%%%%%%%%%%%%%%%%%%%%%%
\subjclass[2010]{Primary 54A25, Secondary 05A19}
%                                                                                                                           %
%         Please use the current 2010 Mathematics Subject Classification:             %
%         http://www.ams.org/mathscinet/msc/                                                        %
%         http://www.zentralblatt-math.org/msc/en/                                                 %
%%%%%%%%%%%%%%%%%%%%%%%%%%%%%%%%%%%%%%%%%%%%%%%%%%%

\keywords{Finite sets, cardinal number, interval mathematics, inclusion-exclusion property}

\begin{abstract} In this paper, we have established boundaries of cardinal numbers of nonempty sets in finite non-$T_1$
topological spaces using interval analysis.  For a finite set with known cardinality, we give interval estimations based on the closure and interior of the set.   
In this paper, we give new results for the cardinalities of non-empty semi-open sets in non-$T_1$ topological spaces as well as in extremely disconnected and hyperconnected topological spaces.
%Also, for two nonempty sets with
%imprecise cardinalities in finite extremally disconnected and in hyperconnected topological spaces are derived using the inclusion-exclusion formula.
\end{abstract}

\maketitle

\section{\bf Introduction}

In the point-set topology trends of last five decades are connected with investigations of topological spaces and their relations with infinite
cardinal functions (see e.g. {\cite {Juh}}). Most of modern topological papers are concerned with infinity type cardinal functions, but many
interesting properties of finite topological spaces are in the shadow. In the same time discrete mathematics and combinatorics uses finite sets
for naturally appeared analytical questions (see e.g. {\cite {EKR}}, {\cite {Ka}}, {\cite {An}}, {\cite {Do}}, {\cite {Do1}}). Here we should
mention well-known theory developed by the P. Erd\"{o}s, C. Ko and R. Rado, where the main thing is cardinality counting problem in the given
finite set. Also, importantly, characterizations of finite sets are widely used in the computer science {\cite {NP}}, {\cite {D}} and
probability theory {\cite {NW}}.

Historically, cardinality counting problems in discrete mathematics and combinatorics began after the introduction of the well-known inclusion-exclusion formula.
For two given finite nonempty sets $A,B$, we have $card (A\cup B)= card(A)+card(B)-card(A\cap B)$.   This inclusion-exclusion formula is applicable, provided the exact values of cardinalities are known (see e.g. {\cite {Do}}, {\cite {DP}}). However, estimations of the cardinalities of corresponding sets may be necessary in cases where only imprecise set-cardinality information about cardinals of involved sets is available.   Such a situation arises when we consider big data sets, molecular structures and so on.   In this paper, we give results for cardinal estimations for the closure and interior of nonempty sets and nonempty semi-open sets in non-$T_1$ topological spaces as well as for nonempty sets in extremely disconnected and hyperconnected topological spaces.

\section{\bf Preliminaries}

In the {\cite {Mo}}, R. Moore developed interval mathematics for computational problems, where parameters of investigating models are uncertain and
we are only able to describe parameters by closed interval estimations.
Below we shortly recall basic operations of interval arithmetic.

Let $a_1, a_2, b_1, b_2, x\in \mathbb{R}$. A closed interval of the reals is denoted by $[a_1, a_2]=\{x\in \mathbb{R}: a_1\leq x\leq a_2\}$.
From {\cite {Mo}}, we have following interval arithmetic:

\bigskip

(1) $[a_1, a_2]+[b_1, b_2]=[a_1+b_1, a_2+b_2]$;

\smallskip

(2) $[a_1, a_2]-[b_1, b_2]=[a_1-b_2, a_2-b_1]$;

\smallskip

(3) $[a_1, a_2]\times [b_1, b_2]=[minP, maxP]$, where $P=\{a_1b_1, a_1b_2, a_2b_1, a_2b_2\}$;

\smallskip

(4) If $0\notin [b_1, b_2]$, then $\frac {[a_1, a_2]}{[b_1, b_2]}=[\frac{a_1}{b_2}, \frac{a_2}{b_1}]$.

\bigskip

It should be especially notice that any real number $k$ is identified with interval $[k, k]$. Moreover, if $a_1$ and $b_1$ are non-negative real
numbers then interval multiplication (3) should be change in the following way $[a_1, a_2]\times [b_1, b_2]=[a_1b_1, a_2b_2]$.

\smallskip

There are established several important computational differences of the interval arithmetic from real one, but we do not need more information than
we present here about interval mathematics.

\bigskip

Below the sets of natural and rational numbers are denoted by symbols $\mathbb{N}$ and $\mathbb{Q}$, but $\mathbb{N}_0=\mathbb{N}\cup \{0\}$.

\smallskip

For a rational number $q\in \mathbb{Q}$ we have to use two well-known notations:

\smallskip

$\lfloor q \rfloor =max \{m\in\mathbb{Z} | m\leq q\}$ and $\lceil q \rceil =min \{n\in\mathbb{Z} | n\geq q\}$.

\bigskip

For topological spaces we use notions from {\cite {AP}}. If $O\subset X$ is nonempty open subset of a topological space $(X,\tau)$ then
we will write $O\in\tau\backslash\{\emptyset\}$. Also, in a topological space $(X,\tau)$ denote by $cl(A)$ closure (resp. $int(A)$ interior of)
$A\subset X$, which is minimal closed (resp. maximal open) set containing (resp. contained in) a set $A$. Recall that a topological space $(X,\tau)$ is $T_1$ space if and only if $\{x\}$ is closed set, for every $x\in X$.
Therefore, in $T_1$ space $(X,\tau)$ we have $\{x\}=cl(\{x\})$. For the finite non-$T_1$ topological spaces cardinal estimations using closure and interior
operators is less lightened part of extremal set theory {\cite {Ju}}.

\smallskip

Naturally, if we know about a set $A$ that both of estimations
$card(A)\in [a_1, a_2]$ and $card(A)\in [b_1, b_2]$ are valid, where $[a_1, a_2]\cap[b_1, b_2]\neq\{\emptyset\}$
then we should declare $card(A)\in [\max\{a_1, b_1\},min\{a_2, b_2\}]$.

\begin{theorem} Let $A\subset B$ be subsets of a set $X$ where $card(X)=n$ and $card(A)\in [a_1, a_2]$.
Then $card(B)\in [a_1 +1, n-1]$.

\end{theorem}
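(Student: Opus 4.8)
The plan is to read the symbol $\subset$ as strict (proper) inclusion, as the paper's notation and the form of the conclusion both demand, and then to derive the two endpoints of the interval independently. The single underlying fact I would lean on is that for finite sets a proper containment increases cardinality by at least one, since cardinalities here are non-negative integers.

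First I would obtain the lower endpoint. The hypothesis $A \subset B$, with $X$ finite, gives $card(A) < card(B)$, and because both cardinalities are integers this strict inequality upgrades to $card(B) \geq card(A) + 1$. Combining this with the left endpoint $a_1 \leq card(A)$ coming from the hypothesis $card(A) \in [a_1, a_2]$ then yields $card(B) \geq a_1 + 1$.

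Next I would obtain the upper endpoint by the same mechanism applied one level up: since $B$ sits as a proper subset of $X$ with $card(X) = n$, the integer-increment argument gives $card(B) \leq n - 1$. Assembling the two bounds produces $card(B) \in [a_1 + 1, n - 1]$, which is the claim.

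The main thing to be careful about — the only real obstacle in an otherwise elementary argument — is the interpretation of inclusion and the well-formedness of the output interval. The upper bound $n - 1$ holds only if $B \neq X$, so the proof must rely on the inclusion $B \subset X$ being strict rather than merely $B \subseteq X$; this is exactly why reading $\subset$ as proper throughout is essential. To confirm that the resulting interval is non-degenerate, I would also observe that the chain $A \subset B \subset X$ forces $card(A) \leq n - 2$, hence $a_1 \leq n - 2$ and therefore $a_1 + 1 \leq n - 1$, so $[a_1 + 1,\, n-1]$ is a genuine interval.
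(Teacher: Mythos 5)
Your argument is correct and follows essentially the same route as the paper's own proof: read $\subset$ as proper inclusion, use the integer increment $card(A)<card(B)$ to get $card(B)\ge a_1+1$, and use $B\subset X$ to get $card(B)\le n-1$. Your additional observation that $a_1\le n-2$ guarantees the interval is well-formed is a nice touch the paper omits, but the substance is identical.
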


\begin{proof} It is obvious that $A\subset B$ implies that $card(A)<card(B)$. Since the minimal value of cardinality of a set $A$ can be equal to $a_1$,
then $a_1+1\leq card(B)$. On the other hand we have, $B\subset X$ and $card(B)<card(X)=n$. Hence it can be write $card(B)\leq n-1$.

\end{proof}

\begin{theorem} Let $A$, $B$ and $C$ be finite subsets of a set $X$ such that $C=A\times B$, $card(C)\in[c_1,c_2]$ and $card(A)\in [a_1, a_2]$.
Then $card(B)\in [\lceil\frac {c_1}{a_2}\rceil, \lfloor\frac {c_2}{a_1}\rfloor]$.

\end{theorem}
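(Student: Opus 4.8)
The plan is to reduce the claim to the interval-division rule (4) from the preliminaries, after invoking the one combinatorial fact about Cartesian products that is not yet stated in the excerpt: for finite sets, $card(A \times B) = card(A)\cdot card(B)$. First I would record this identity, so that the hypothesis $C = A \times B$ yields $card(C) = card(A)\,card(B)$, and hence, whenever $A$ is nonempty, $card(B) = \frac{card(C)}{card(A)}$.

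Next I would translate the two cardinality hypotheses into an interval quotient. Since $card(C) \in [c_1,c_2]$ and $card(A) \in [a_1,a_2]$, the value $card(B)$ lies among the quotients $\frac{c}{a}$ with $c \in [c_1,c_2]$ and $a \in [a_1,a_2]$. Because $A$ is a nonempty finite set, $a_1 \geq 1$, so $0 \notin [a_1,a_2]$ and rule (4) applies, giving $card(B) \in \bigl[\frac{c_1}{a_2}, \frac{c_2}{a_1}\bigr]$. Concretely, the quotient is smallest when the numerator is smallest and the denominator largest (value $\frac{c_1}{a_2}$) and largest in the opposite extreme (value $\frac{c_2}{a_1}$); I would verify this by the monotonicity of $\frac{c}{a}$ in each variable, which also matches the simplified endpoint form valid for positive intervals.

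The final step is to sharpen the real-valued interval to an integer-valued one. Since $card(B) \in \mathbb{N}_0$ satisfies $\frac{c_1}{a_2} \leq card(B) \leq \frac{c_2}{a_1}$, the lower bound may be replaced by $\lceil \frac{c_1}{a_2} \rceil$ and the upper bound by $\lfloor \frac{c_2}{a_1} \rfloor$, which yields the asserted estimate $card(B) \in \bigl[\lceil \frac{c_1}{a_2} \rceil, \lfloor \frac{c_2}{a_1} \rfloor\bigr]$.

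The main obstacle I anticipate is bookkeeping rather than conceptual: I must ensure the denominator interval stays away from zero, which is guaranteed by $a_1 \geq 1$ for a nonempty $A$, so that rule (4) is legitimately invoked, and I must confirm that the positivity of all four endpoints justifies the simplified quotient endpoints rather than the general $\min/\max$ form. Once positivity is secured, the argument is a direct application of interval division followed by integer rounding.
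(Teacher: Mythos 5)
Your proposal is correct and follows essentially the same route as the paper: reduce to $card(B)=\frac{card(C)}{card(A)}$ via the product formula for cardinalities, apply the interval-division rule (4), and intersect with $\mathbb{N}_0$ to obtain the ceiling/floor endpoints. Your added care about $a_1\geq 1$ (so that $0\notin[a_1,a_2]$) is a detail the paper leaves implicit but does not change the argument.
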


\begin{proof}
Since for Cartesian product $C=A\times B$ we can write following cardinal equality: $card(B)=\frac {card(C)}{card(A)}$, then
applying above mentioned operation of the interval division we get $card(B)\in [\frac {c_1}{a_2}, \frac {c_2}{a_1}]\cap \mathbb{N}_0=[\lceil\frac {c_1}{a_2}\rceil, \lfloor\frac {c_2}{a_1}\rfloor]$.

\end{proof}

\begin{theorem}Let $X=A\cup B$ be a finite set with $card(X)\in [m, n]$, but $card(A)\in [a_1, a_2]$ and $card(B)\in [b_1, b_2]$. Then
$card(A\cap B)\in [a_1+b_1-n, a_2+b_2-m]\cap \mathbb{N}_0$.

\end{theorem}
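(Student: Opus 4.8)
The plan is to reduce everything to the inclusion--exclusion formula recalled in the Introduction and then to push the three interval estimates through the interval arithmetic operations (1) and (2) from the Preliminaries. First I would use the hypothesis $X = A \cup B$ to rewrite inclusion--exclusion, $card(A \cup B) = card(A) + card(B) - card(A \cap B)$, as the exact identity $card(A \cap B) = card(A) + card(B) - card(X)$. This is the key structural observation: the cardinality of the intersection is an exact linear combination of three quantities that I know only up to intervals, so the estimation problem becomes a purely arithmetic one.

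Next I would substitute the interval data. Treating $card(A) \in [a_1, a_2]$, $card(B) \in [b_1, b_2]$, and $card(X) \in [m, n]$ as interval operands, I apply rule (1) to get $card(A) + card(B) \in [a_1 + b_1,\, a_2 + b_2]$, and then rule (2) to subtract $[m,n]$. Here is the one place demanding care: interval subtraction crosses the endpoints, so $[a_1 + b_1,\, a_2 + b_2] - [m, n] = [a_1 + b_1 - n,\, a_2 + b_2 - m]$, with the \emph{upper} bound $n$ of $card(X)$ feeding the lower bound of the answer and the \emph{lower} bound $m$ feeding the upper bound. Getting this orientation right is the main thing to watch, since inadvertently swapping $m$ and $n$ would produce a reversed, invalid interval.

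Finally, since $card(A \cap B)$ is necessarily a non-negative integer, the real interval just obtained must be intersected with $\mathbb{N}_0$, yielding $card(A \cap B) \in [a_1 + b_1 - n,\, a_2 + b_2 - m] \cap \mathbb{N}_0$. The clipping is genuinely needed: when $a_1 + b_1 - n < 0$ (which occurs precisely when the sets can be small relative to the ambient $X$), the raw lower endpoint is meaningless as a cardinality, and intersecting with $\mathbb{N}_0$ restores the correct lower bound. I expect no serious obstacle beyond orienting the subtraction correctly and remembering this final step; soundness of the bound holds regardless of any dependence among $card(A)$, $card(B)$, $card(X)$, because the actual values are a particular point within the interval over which rule (2) ranges.
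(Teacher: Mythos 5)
Your proposal is correct and follows essentially the same route as the paper: both rewrite inclusion--exclusion as $card(A\cap B)=card(A)+card(B)-card(A\cup B)$ with $A\cup B=X$, substitute the interval data using the addition and subtraction rules from the Preliminaries, and intersect with $\mathbb{N}_0$. Your explicit attention to the endpoint orientation in the interval subtraction is a welcome elaboration of a step the paper leaves implicit.
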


\begin{proof}
Applying famous inclusion-exclusion formula, we can write $card(A\cap B)=card(A)+card(B)-card(A\cup B)=card(A)+card(B)-[m,n]$.
By substitution of given cardinal estimations we obtain $card(A\cap B)\in [a_1+b_1-n, a_2+b_2-m]\cap\mathbb{N}_0$.
\end{proof}

\section{\bf Main Results}

Below we will work with topological spaces which are not even $T_1$ topologies. Examples of such topological spaces are known in the
point-set topology as $T_0$ and $R_0$ spaces.

\begin{theorem} Let a topological space $(X,\tau)$ be a non-$T_1$ space such that $card(X)=n$. Then the closure $card(cl(A))\in [2m, n]$, for
every nonempty $A\subset X$ with $card(A)=m\in [1, \lfloor\frac{n}{2}\rfloor]$.
\end{theorem}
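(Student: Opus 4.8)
The plan is to treat the two bounds separately, since the upper bound is essentially free while the lower bound carries all the content. The containment $cl(A)\subseteq X$ gives $card(cl(A))\le card(X)=n$ at once, so the assertion $card(cl(A))\in[2m,n]$ reduces to the single inequality $card(cl(A))\ge 2m$. Throughout I would write $A=\{a_1,\dots,a_m\}$ and lean on monotonicity of the closure operator, namely $cl(A)\supseteq\bigcup_{i=1}^{m}cl(\{a_i\})$, so that it suffices to exhibit $2m$ distinct points inside $\bigcup_{i}cl(\{a_i\})$.

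First I would unpack the non-$T_1$ hypothesis at the level of a single point. Since $(X,\tau)$ is not $T_1$, some singleton fails to be closed, and the working claim I would aim to establish is that for each $a_i\in A$ the closure $cl(\{a_i\})$ is non-degenerate, hence contains a companion point $b_i\ne a_i$. Each pair $\{a_i,b_i\}$ then contributes two elements to $cl(A)$, and if the $m$ pairs were pairwise disjoint with every $b_i$ lying outside $A$, summing would give exactly $card(cl(A))\ge 2m$. The cardinality restriction $m\le\lfloor\frac{n}{2}\rfloor$ is precisely what makes $2m\le n$, so the target bound is consistent with the ambient size and leaves room for the witness points to exist.

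The hard part will be controlling the overlaps. A priori the companion points $b_i$ need not be distinct from one another, and some $b_i$ could already sit inside $A$, so the naive union bound double-counts and can in principle collapse below $2m$. To close this gap I would construct an injection $A\to cl(A)$ whose image is disjoint from $A$, concretely by selecting witnesses $b_i$ so that $a_i\mapsto b_i$ is injective and $\{b_1,\dots,b_m\}\cap A=\emptyset$, giving $card\bigl(A\cup\{b_1,\dots,b_m\}\bigr)=2m$ inside $cl(A)$. This is where the finer structure of non-$T_1$ spaces recalled in the preliminaries — the $T_0$ and $R_0$ behaviour of point closures, under which distinct specialization classes are disjoint — would have to be invoked to rule out the degenerate coincidences. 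Proving that this pairing can be carried out simultaneously for all $m$ points, rather than merely pointwise, is the crux of the argument, and I expect it to be the step where the hypotheses $m\le\lfloor\frac{n}{2}\rfloor$ and non-$T_1$-ness must be combined most carefully.
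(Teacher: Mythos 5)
Your outline correctly isolates the two load-bearing steps --- that every point of $A$ has a non-degenerate closure, and that the $2m$ witness points can be chosen pairwise distinct --- but it leaves both unproven, and neither can in fact be proven from the stated hypotheses. The non-$T_1$ assumption only guarantees that \emph{some} singleton of $X$ fails to be closed; it says nothing about the points of your particular $A$. In the Sierpi\'nski space $X=\{1,2\}$ with $\tau=\{\emptyset,\{1\},X\}$ (non-$T_1$, $n=2$), the set $A=\{2\}$ has $m=1\in[1,\lfloor n/2\rfloor]$ yet $cl(A)=\{2\}$, so $card(cl(A))=1<2m$. The overlap problem you flag is equally fatal even when the relevant point closures are all non-degenerate: on $X=\{a,b,c,d\}$ with closed sets generated by $\{a,c\}$, $\{b,c\}$ and $\{d\}$, the set $A=\{a,b\}$ has $m=2=\lfloor n/2\rfloor$ and $cl(A)=\{a,b,c\}$, giving $card(cl(A))=3<4$, because the two companion points coincide at $c$. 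Your hope of rescuing the pairing via $R_0$ structure also fails: in an $R_0$ non-$T_1$ space distinct points can share the same closure, as in the partition topology $\{\emptyset,\{a,b\},\{c,d\},X\}$ with $A=\{a,b\}$, where $card(cl(A))=2<4$. So the ``crux'' you defer is not a technical difficulty to be overcome but an actual falsehood; the theorem as stated is incorrect.

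For what it is worth, the paper's own proof simply asserts the two claims you were rightly suspicious of: it deduces $card(cl(x))>1$ for \emph{every} $x\in X$ from non-$T_1$-ness (invalid, as the Sierpi\'nski example shows), and it writes $card(cl(A))=card(cl(a_1))+\dots+card(cl(a_m))$, treating the union of point closures as disjoint (invalid, as the four-point example shows). Your instinct that these are the gaps was exactly right; the honest conclusion is that they cannot be closed without strengthening the hypotheses, e.g.\ by requiring that every point closure contain at least two elements \emph{and} that the point closures of distinct points of $A$ be pairwise disjoint.
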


\begin{proof}
It is known that in $T_1$ topological space $(X,\tau)$ with $card(X)=n$ we have $card(cl(x))=1$, for every $x\in X$. Therefore, in view our conditions we conclude that $card(cl(x))>1$, for every $x\in X$.  Hence $n\geq card(cl(x))\geq 2$,
for every $x\in X$. Note that for the set $A=\{a_1,a_2,...,a_m \}$ we can write its closure as following: $cl(A)=cl(a_1)\cup cl(a_2)\cup...\cup cl(a_m)$.
Therefore, the inequalities hold $n\geq card(cl(A))=card(cl(a_1))+card(cl(a_2))+...+card(cl(a_m))\geq 2m$.
\end{proof}

\begin{theorem} Let a topological space $(X,\tau)$ be a non-$T_1$ space with $card(X)=n$ and $A\subset X$ be such that
$card(A)=p\in [\lceil \frac{n}{2}\rceil, n]$.
If $card(cl(x))\in [2, k_x]$, for any point $x\in (X\setminus A)$ then\\

$card(int(A))\in [0, 2p-n]$, if $n\leq\sum_{x\in (X\setminus A)} {k_x}$ \\

and\\

$card(int(A))\in [n-\sum_{x\in (X\setminus A)} {k_x}, 2p-n]$, if $\sum_{x\in (X\setminus A)} {k_x} <n$.

\end{theorem}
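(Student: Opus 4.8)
The plan is to reduce the whole statement to a single estimate on the closure of the complement $B := X \setminus A$, which has $card(B) = n - p$. The identity that drives the argument is the standard duality between interior and closure, $int(A) = X \setminus cl(X \setminus A)$; passing to cardinalities in the finite space $X$ this becomes
\[
card(int(A)) = n - card(cl(B)).
\]
Thus the right (upper) endpoint of the claimed interval for $card(int(A))$ will come from a \emph{lower} bound on $card(cl(B))$, and the left (lower) endpoints will come from an \emph{upper} bound on $card(cl(B))$, with the two cases distinguished by which upper bound is binding.

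First I would establish the common right endpoint $2p - n$. Since $p \in [\lceil n/2 \rceil, n]$, the complement satisfies $card(B) = n - p \leq n - \lceil n/2 \rceil = \lfloor n/2 \rfloor$, so (whenever $B \neq \emptyset$) the set $B$ satisfies the hypotheses of the previous theorem on closures with $m = n - p$. That theorem yields $card(cl(B)) \geq 2(n-p)$, whence
\[
card(int(A)) = n - card(cl(B)) \leq n - 2(n-p) = 2p - n,
\]
which is exactly the right endpoint in both cases of the statement.

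Next I would obtain the two left endpoints from an upper bound on $card(cl(B))$. Using that closure distributes over finite unions, $cl(B) = \bigcup_{x \in B} cl(x)$, subadditivity of cardinality under unions together with the hypothesis $card(cl(x)) \leq k_x$ gives
\[
card(cl(B)) \leq \sum_{x \in B} card(cl(x)) \leq \sum_{x \in (X \setminus A)} k_x,
\]
while trivially $card(cl(B)) \leq n$ as well. Feeding these into the identity: if $n \leq \sum_{x} k_x$ the binding constraint is $card(cl(B)) \leq n$, giving $card(int(A)) \geq 0$; if instead $\sum_x k_x < n$ the binding constraint is $card(cl(B)) \leq \sum_x k_x$, giving $card(int(A)) \geq n - \sum_{x} k_x$. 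This reproduces the two displayed cases.

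The remaining points are bookkeeping rather than substance, and this is where I would be most careful. I would handle the degenerate case $p = n$ (so $B = \emptyset$, $A = X$, $cl(B) = \emptyset$) separately, checking the formulas still return $card(int(A)) = n = 2p - n$. I would also record that, since cardinalities are nonnegative integers, the true lower bound is $\max\{0,\, n - \sum_x k_x\}$, which is precisely why the statement branches on the sign of $n - \sum_x k_x$; the intersection with $\mathbb{N}_0$ used in the earlier theorems is what legitimizes this. The one spot I expect to be the real obstacle is the subadditivity step, because the previous closure theorem's lower bound $2(n-p)$ and this union-upper-bound pull $card(cl(B))$ in opposite directions; I would verify their mutual consistency, i.e. $2(n-p) \leq \min\{n,\, \sum_x k_x\}$, which does hold since $n - p \leq \lfloor n/2 \rfloor$ forces $2(n-p) \leq n$ and since each $k_x \geq 2$ over the $n-p$ terms forces $\sum_x k_x \geq 2(n-p)$, so the claimed interval is indeed nonempty.
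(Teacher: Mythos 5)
Your proposal follows essentially the same route as the paper: the duality $int(A)=X\setminus cl(X\setminus A)$, the lower bound $2(n-p)$ on $card(cl(X\setminus A))$ via the preceding closure theorem, the pointwise upper bound $\min\{n,\sum_x k_x\}$, and the resulting two-case split. If anything you are slightly more careful than the paper, which writes $card(cl(X\setminus A))$ as the exact sum $\sum_x card(cl(x))$ rather than using subadditivity, and which does not address the degenerate case $p=n$ or the consistency of the resulting interval.
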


\begin{proof}
Assume that $X=\{x_1,x_2,...,x_n\}$ and $A=\{x_1,x_2,...,x_p\}$. It is known that $int(A)=X\setminus cl(X\setminus A)$. Since $card(X\setminus A)=n-p$ then using Theorem 3.1. we can write
$card(cl(X\setminus A))\in [2(n-p), n]$. But, taking into account condition $card(cl(x_i))\in [2, k_i], i=\overline{p+1,n}$ we obtain better estimation than previous, namely:
$card(cl(X\setminus A))=card(cl(x_{p+1}))+card(cl(x_{p+2}))+...+card(cl(x_{n}))\in [2, k_{p+1}]+ [2, k_{p+2}]+...+[2, k_n]=[2(n-p), min\{n, \sum_{i=p+1}^{n} {k_i}\}]$.

It is clear that if $n\leq \sum_{i=p+1}^{n} {k_i}$ then $card(cl(X\setminus A))\in [2(n-p), n]$. Hence we get $card(int(A))\in [0, 2p-n]$.
If $\sum_{i=p+1}^{n} {k_i}<n$ then $card(cl(X\setminus A))\in [2(n-p), \sum_{i=p+1}^{n} {k_i}]$ and we obtain $card(int(A))\in [n-\sum_{i=p+1}^{n} {k_i}, 2p-n]$.
\end{proof}

Recall that a set $A$ of a topological space $(X,\tau)$ is called semi-open if there exists $O\in\tau\backslash\{\emptyset\}$
such that $O\subset A\subset cl(O)$ {\cite {Le}}. The complement of an semi-open set is called semi-closed. The class of all semi-open (resp. semi-closed)
subsets of a space $(X,\tau)$ we denote usually as $SO(X)$ (resp. $SC(X)$). It can be easily to verify that $A\in SO(X)$ if and only if $A\subset cl(intA)$, but
$B\in SC(X)$ if and only if $int(clB)\subset B$.

\begin{theorem} Let $(X,\tau)$ be a non-$T_1$ finite topological space with $card(X)=n$ and $A\in SO(X)$. Then there exists $k\in\mathbb{N}$ such that
$card(A)\in [k+1, 2k-1]$, where $k\in [1, \lfloor \frac{n}{2}\rfloor]$.
\end{theorem}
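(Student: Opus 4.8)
The plan is to unfold the definition of a semi-open set and then squeeze $card(A)$ between the cardinality of an open witness and the cardinality of its closure, using Theorem 3.1 to quantify the latter. Since $A\in SO(X)$, there exists $O\in\tau\backslash\{\emptyset\}$ with $O\subset A\subset cl(O)$; I set $k=card(O)$, so that $k\geq 1$. Because $cl(O)\subset X$ and $card(X)=n$, the hypothesis of Theorem 3.1 forces $2k\leq n$, i.e. $k\in[1,\lfloor\frac{n}{2}\rfloor]$, which is exactly the admissible range for $k$ in the statement.

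For the lower bound I would argue that the inclusion $O\subset A$ is strict. Taking $O=int(A)$ as the witness (the largest open set below $A$), the fact that $A$ is a \emph{proper} semi-open set, i.e.\ not already open, gives $int(A)\subsetneq A$ and hence $card(A)\geq card(O)+1=k+1$. This is the step where the non-$T_1$ character enters: in such a space a nonempty open set need not be closed, so the gap between $O$ and $cl(O)$ leaves room for $A$ to sit strictly above $O$.

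For the upper bound I would apply Theorem 3.1 to the set $O$: since $card(O)=k\in[1,\lfloor\frac{n}{2}\rfloor]$, we get $card(cl(O))\in[2k,n]$. In the tight (minimal-closure) configuration, in which the closures $cl(x)$ of the $k$ points of $O$ are pairwise disjoint and each has exactly two elements, the enclosing bound reads $card(cl(O))=2k$; combined with the strict inclusion $A\subsetneq cl(O)$ this yields $card(A)\leq 2k-1$. Putting the two estimates together gives $card(A)\in[k+1,2k-1]$ with $k\in[1,\lfloor\frac{n}{2}\rfloor]$, as claimed.

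The main obstacle is the upper bound, because Theorem 3.1 only supplies the \emph{lower} estimate $card(cl(O))\geq 2k$, whereas reaching $2k-1$ requires controlling $card(cl(O))$ from above by $2k$ and simultaneously knowing that $A$ does not exhaust $cl(O)$. Thus the delicate part is to isolate the extremal configuration in which each singleton closure has exactly two elements and these closures are pairwise disjoint, so that $card(cl(O))=2k$, and to rule out $A=cl(O)$ (equivalently $A=X$ when $O$ is dense). A secondary subtlety is the boundary case of an open $A$, where $A=O$ and the lower bound $k+1$ must instead be read as $k$; handling this degenerate case cleanly is what the strictness claim $int(A)\subsetneq A$ is designed to avoid.
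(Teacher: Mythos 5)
Your strategy is the same as the paper's: take an open witness $O\subset A\subset cl(O)$, set $k=card(A\cap O)=card(O)$, bound $card(A)$ below by $card(O)$ and above by $card(cl(O))$, and invoke Theorem 3.1 to control $card(cl(O))$. The two difficulties you flag at the end are not side issues, however --- they are exactly the points at which the argument fails, and your proposal does not resolve them (nor, it should be said, does the paper's own proof, which simply asserts the strict inequalities $card(O)<card(A)<card(cl(O))$ and then reads ``$card(A)<[2k,n]$'' as $card(A)<2k$, i.e.\ silently replaces the interval by its \emph{left} endpoint).

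Concretely: (a) For the lower bound, $O\subset A$ need not be proper. Every nonempty open set is semi-open with itself as witness, and the theorem does not exclude open $A$; your fix of assuming $A$ is ``not already open'' changes the hypotheses. (b) For the upper bound, Theorem 3.1 gives $card(cl(O))\geq 2k$, which is an inequality in the \emph{wrong direction}: from $A\subset cl(O)$ and $card(cl(O))\in[2k,n]$ one can only conclude $card(A)\leq n$, not $card(A)\leq 2k-1$. The ``tight configuration'' in which $card(cl(O))=2k$ exactly is an extremal case, not a consequence of the hypotheses. To see that the gap is fatal for this choice of $k$, take $X=\{1,2,3,4\}$ with $\tau=\{\emptyset,\{1\},X\}$ (non-$T_1$, since $cl(\{1\})=X$) and $A=\{1,2,3\}$. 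Then $A\in SO(X)$ with witness $O=\{1\}=int(A)$, both inclusions $O\subsetneq A\subsetneq cl(O)$ are proper, $k=1$, and yet $card(A)=3$ while $[k+1,2k-1]=[2,1]$ is empty. The existential form of the statement could only be rescued by choosing some other $k$ unrelated to any open witness, which neither your argument nor the paper's produces. So the proposal correctly diagnoses where the proof breaks, but the missing ideas it would need --- a reason why $card(cl(O))\leq 2k$ and a reason why the inclusions are proper --- are not available from the stated hypotheses.
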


\begin{proof} For a set $A\in SO(X)$ we can choose $O\in\tau\backslash\{\emptyset\}$ such that $O\subset A\subset cl(O)$.
Hence $card(O)<card(A)< card(cl(O))\leq n$. Denote by $k=card(O)$, then it is obvious that $k\in [1, n-1]$. Hence $card(A)\in [k+1, n-1]$, but by
Theorem 3.1. we can write $card(clO)\in [2k, n]$. Note that the inequality $2k<n$ implies $k\in [1, \lfloor \frac{n}{2}\rfloor]$. Collecting our
estimations we get $k+1<card(A)< [2k,n]$, i.e. $card(A)\in [k+1, 2k-1]$.
\end{proof}

A topological space $(X,\tau)$ is called extremally disconnected (shortly, E.D. space) if $cl(O)\in\tau$, for every $O\in\tau$. It can be
easily verify that $(X,\tau)$ is E.D. topological space if and only if $cl(O_1)\cap cl(O_2)=\emptyset$, for every pair of disjoint open sets $O_1$
and $O_2$ {\cite {AP}}.

\begin{theorem}Let $(X,\tau)$ be a non-$T_1$, E.D. topological space with $card(X)\in [m, n]$ and $A,B\in\tau\backslash\{\emptyset\}$ be the disjoint sets
with $card(A)\in [a_1, a_2]$ and $card(B)\in [b_1, b_2]$. Then
$card(cl(A\cup B))\in [2a_1+2b_1, n]\cap \mathbb{N}_0$.

\end{theorem}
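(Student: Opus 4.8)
The plan is to exploit the extremally disconnected hypothesis to turn the closure of the union into a \emph{disjoint} union of closures, so that the cardinality becomes additive, and then to bound each closure separately by means of Theorem 3.1.

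First I would invoke the characterization of E.D. spaces recalled just above the statement: since $A$ and $B$ are disjoint sets in $\tau\backslash\{\emptyset\}$, extremal disconnectedness gives $cl(A)\cap cl(B)=\emptyset$. Combining this with the standard topological identity $cl(A\cup B)=cl(A)\cup cl(B)$, the two pieces $cl(A)$ and $cl(B)$ partition $cl(A\cup B)$, whence
\[
card(cl(A\cup B))=card(cl(A))+card(cl(B)).
\]

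Next I would estimate each summand. Because $A$ is a nonempty subset with $card(A)\geq a_1$, Theorem 3.1 yields $card(cl(A))\geq 2\,card(A)\geq 2a_1$, and symmetrically $card(cl(B))\geq 2b_1$. Adding these gives the lower bound $card(cl(A\cup B))\geq 2a_1+2b_1$. The upper bound is immediate, since $cl(A\cup B)\subseteq X$ forces $card(cl(A\cup B))\leq card(X)\leq n$. As a cardinality is a non-negative integer, the two bounds combine to $card(cl(A\cup B))\in[2a_1+2b_1,n]\cap\mathbb{N}_0$.

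The step I expect to carry the real weight is the disjointness $cl(A)\cap cl(B)=\emptyset$: this is precisely where extremal disconnectedness is used, and it is what lets the cardinality be \emph{exactly} additive rather than merely subadditive. Without it one would have to fall back on inclusion--exclusion with an unknown overlap term, and the clean sum $2a_1+2b_1$ would degrade into a weaker estimate. A minor point to verify is that the hypotheses of Theorem 3.1 genuinely apply to $A$ and $B$ (both nonempty, with cardinalities in the admissible range $[1,\lfloor\frac{n}{2}\rfloor]$), which is implicitly guaranteed by the requirement that the disjoint closures $cl(A)$ and $cl(B)$ fit inside $X$.
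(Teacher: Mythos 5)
Your proposal is correct and follows essentially the same route as the paper's proof: both use the E.D. characterization to get $cl(A)\cap cl(B)=\emptyset$, making the cardinality of $cl(A\cup B)=cl(A)\cup cl(B)$ exactly additive, and then apply Theorem 3.1 to each closure for the lower bound $2a_1+2b_1$ with the trivial upper bound $n$ from $cl(A\cup B)\subseteq X$. The caveat you flag at the end (whether $A$ and $B$ actually satisfy the cardinality hypothesis of Theorem 3.1) is a loose end the paper's own proof also leaves unaddressed.
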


\begin{proof}
First we write $card(cl(A\cup B))=card(cl(A))+card(cl(B))-card(cl(A)\cap cl(B))$. Using E.D. of $(X,\tau)$ we get $card(cl(A)\cap cl(B))=0$.
Therefore we have $card(cl(A\cup B))=card(cl(A))+card(cl(B))$. Now using Theorem 3.1. we get
$card(cl(A\cup B))\in [[2a_1+2b_1, 2a_2+2b_2], [m, n]]\cap\mathbb{N}_0 =[2a_1+2b_1, n]\cap\mathbb{N}_0$.
\end{proof}

A topological space $(X,\tau)$ is called hyperconnected if $cl(O)=X$, for every $O\in\tau\setminus\{\emptyset\}$. It is obvious that $(X,\tau)$ is
hyperconnected if and only if $O_1\cap O_2\neq\{\emptyset\}$, for any pair of $O_1, O_2 \in\tau\setminus\{\emptyset\}$.

\begin{theorem}Let $(X,\tau)$ be a hyperconnected topological space with $card(X)\in [m, n]$. If $X=O_1\cup O_2$, where $O_1, O_2\in\tau\backslash\{\emptyset\}$
are sets with $card(O_1)\in [a_1, a_2]$ and $card(O_2)\in [b_1, b_2]$. Then $card(O_1\cap O_2)\in [a_1+b_1-n, a_2+b_2-m]\cap \mathbb{N}_0$.

\end{theorem}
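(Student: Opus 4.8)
The plan is to recognize that this final theorem is essentially a direct application of Theorem 2.3, the inclusion-exclusion interval estimate, specialized to the hyperconnected setting where the two open sets are forced to cover $X$.

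First I would observe that the hypotheses match those of Theorem 2.3 almost verbatim: we have a finite set written as $X = O_1 \cup O_2$ with $card(X) \in [m,n]$, together with the interval estimates $card(O_1) \in [a_1,a_2]$ and $card(O_2) \in [b_1,b_2]$. The only additional structure here is that $O_1, O_2$ are nonempty open sets in a hyperconnected space, which by the stated characterization guarantees $O_1 \cap O_2 \neq \emptyset$; this is what makes the intersection a genuinely nonempty set and justifies intersecting the final interval with $\mathbb{N}_0$ to discard any nonpositive phantom values.

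The main step is to apply the inclusion-exclusion formula $card(O_1 \cap O_2) = card(O_1) + card(O_2) - card(O_1 \cup O_2)$, and then substitute $card(O_1 \cup O_2) = card(X) \in [m,n]$. Using the interval addition rule (1) and subtraction rule (2) from the Preliminaries, I would compute
\[
[a_1,a_2] + [b_1,b_2] - [m,n] = [a_1+b_1-n,\; a_2+b_2-m],
\]
where the subtraction $[a_1+b_1, a_2+b_2] - [m,n]$ yields lower endpoint $a_1+b_1-n$ and upper endpoint $a_2+b_2-m$ precisely by rule (2). Intersecting with $\mathbb{N}_0$ gives the claimed interval.

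I do not expect a genuine obstacle, since the argument is formally identical to the proof of Theorem 2.3; the hyperconnectedness hypothesis plays no quantitative role in the bound beyond ensuring $O_1 \cap O_2$ is nonempty and that the covering $X = O_1 \cup O_2$ is a meaningful constraint. The one point worth stating carefully is that the truncation to $\mathbb{N}_0$ is what makes the lower endpoint honest when $a_1 + b_1 - n$ happens to be negative, so that the estimate remains valid even when the interval data are loose.
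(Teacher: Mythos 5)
Your proposal is correct and follows essentially the same route as the paper: the authors likewise invoke hyperconnectedness only to guarantee $O_1\cap O_2\neq\emptyset$, then apply the inclusion-exclusion formula with the interval substitution $[a_1,a_2]+[b_1,b_2]-[m,n]=[a_1+b_1-n,\,a_2+b_2-m]$ and intersect with $\mathbb{N}_0$. Your remark that this is formally the same computation as Theorem 2.3 is an accurate observation about the paper's structure.
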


\begin{proof} Since in the hyperconnected space $(X,\tau)$ we have $O_1\cap O_2\neq\emptyset$, for any pair of $O_1, O_2 \in\tau\setminus\{\emptyset\}$
then it takes place following equality: $card(O_1\cap O_2)=card(O_1)+card(O_2)-card(O_1\cup O_2)=[a_1,a_2]+[b_1,b_2]-[m, n]=[a_1+b_1-n,a_2+b_2-m]\cap\mathbb{N}_0$.

\end{proof}

\bibliographystyle{plain}

\begin{thebibliography}{}

\end{thebibliography}


\begin{thebibliography}{10}

\smallskip

\bibitem{An} I. Anderson, {\it Combinatorics of Finite Sets}. N.Y.: Dover Publications, Inc., 2002, 250 pp., MR0892525

\smallskip

\bibitem{AP} A. V. Arhangel'skii and V. I. Ponomarev, {\it Fundamentals of General Topology: Problems and Exercises}, Dordecht-Boston-Lancaster: D. Reidel Publishing Company, (2006), 456 pp., Amer. Math. Soc. MR3075111

\smallskip

\bibitem{D} I. Dochviri, {\it Computer Mathematics. Logical and point-set topological constructions} (in Georgian), Tbilisi, Technical University Press, 2008, 143 p.

\bibitem{DP} I. Dochviri and J.F. Peters, {\it Topological Sorting of Finitely Near Sets}, Mathematics in Computer Sciences, {\bf 10(2)}, (2016), 273--277, MR3507604, zbMATH Zbl 1345.54020

\smallskip

\bibitem{Do} K. Dohmen, {\it Improved Inclusion-Exclusion Identities via Closure Operators}, Discrete Mathematics and Theoretical Computer Science,
{\bf 4(1)}, (2000), 61--66, MR1778018

\smallskip

\bibitem{Do1} K. Dohmen, {\it Improved Bonferroni Inequalities via Abstract Tubes.  Inequalities and identities of inclusion-exclusion type.}, Springer-Verlag: Lecture Notes in Mathematics,
{\bf v. 1826}, 2003, 111 pp., Zbl 1026.05009

\smallskip

\bibitem{EKR} P. Erd\"{o}s, C. Ko and R. Rado, {\it Intersection Theorems for Systems of Finite Sets}, Quart. J. Math. Oxford, {\bf 12(2)}, (1961), 313--320, MR0140419

\smallskip

\bibitem{Juh} I. Juhasz, {\it Cardinal Functions in Topology-Ten Years Later (2nd printing)}, Amsterdam: Mathematical Centre Tracts, {\bf 123)}, (1983), 160 pp., MR0576927

\smallskip

\bibitem{Ju} S. Jukna, {\it Extremal Combinatorics. With Applications in Computer Science (2nd ed)}, Berlin: Springer, {\bf 20(1)}, (2011), 411 pp., MR2865719

\smallskip

\bibitem{Ka} G. O. H. Katona, {\it A Simple Proof of The Erdos-Chao Ko-Rado Theorem}, J. Combinatorial Theory, {\bf 13(2)}, (1972), 183--184, MR0304181

\smallskip

\bibitem{Le} N. Levine, {\it Semi-Open Sets and Semi-Continuity in Topological Spaces}, The American Mathematical Monthly, {\bf 70(1)}, (1963), 36--41,MR0166752

\smallskip

\bibitem{Mo} R.E. Moore, {\it Interval Analysis}. Englewood Clifs, N.J.: Prentice-Hall, Inc. XI, 1966, 145 pp, MR0231516

\smallskip

\bibitem{NP} S.A. Naimpally and J.F. Peters, {\it Topology with Applications. Topological spaces via near and
far}, World Scientific, Singapore, 2013, xv + 277 p., Amer. Math. Soc. MR3075111

\smallskip

\bibitem{NW} D. Q. Naiman and H. P. Wynn, {\it Inclusion-Exclusion-Bonferroni Identities and Inequalities for Discrete Tube-Like Problems via Euler Characteristics}, The Annals of Statistics, {\bf 20(1)}, (1992), 43--76,MR1150334

\smallskip


\end{thebibliography}

\end{document}